\documentclass[reqno,a4paper,11pt]{amsart}

\usepackage[all,poly]{xy}
\usepackage{amsfonts}
\usepackage[mathcal]{eucal}
\usepackage{eufrak}
\usepackage{amssymb}
\usepackage{amsmath}
\usepackage{mathrsfs}
\usepackage{color}
\usepackage[pagebackref,colorlinks]{hyperref}
\usepackage{enumerate}


\theoremstyle{plain}
\newtheorem {lemma}{Lemma} 
\newtheorem {theorem}[lemma]{Theorem}

\newtheorem {corollary}[lemma]{Corollary}

\theoremstyle{definition}

\newtheorem {example}[lemma]{Example}

\theoremstyle{definition}

{}


\newcommand{\Ker}{\operatorname{Ker}}

\newcommand{\rModd}{\operatorname{-Mod}}

\newcommand{\Hom}{\operatorname{Hom}}

\renewcommand{\Im}{\operatorname{Im}}

\title[Recollements and Leavitt path algebras]{Recollements, sinks elimination and\\   Leavitt path algebras}


\author{R. Hazrat}
\address{
Western Sydney University,
Australia}\email{r.hazrat@westernsydney.edu.au}

\author{J. Huang}
\address{
Fujian Normal University, China
}\email{754068710@qq.com}


\begin{document}
\begin{abstract}
For Leavitt path algebras, we show that whereas removing sources  from a graph produces a Morita equivalence, removing sinks gives rise to a recollement situation. In general, we show that for a graph $E$  and a finite hereditary subset $H$ of $E^0$ there is a recollement 
$$\xymatrix{
L_K(E/\overline H) \rModd  \ar[r] & \ar@<3pt>[l]    \ar@<-3pt>[l]  
L_K(E) \rModd  \ar[r]  & \ar@<3pt>[l] \ar@<-3pt>[l] L_K(E_H) \rModd  .}$$
We record several corollaries. 
\end{abstract}

\maketitle

In this short note we record an application of recollements in naturally decomposing a Leavitt path algebra and gluing the pieces together. 
A recollement (gluing) of abelian categories consists of three abelian categories $\mathscr A, \mathscr B, \mathscr C$ and six functors relating them as follows
\begin{equation}\label{popop}
\xymatrix{
\mathscr A \ar[r]|-{i} & \ar@<5pt>[l]^p    \ar@<-5pt>[l]_q  
\mathscr B \ar[r]|-{j}  & \ar@<5pt>[l]^r \ar@<-5pt>[l]_s   \mathscr C,}
\end{equation}
such that 
\begin{enumerate}
\item[(i)] $(s,j,r)$ and $(q,i,p)$ are adjoint triples, i.e., $s$ is left adjoint to $j$ which is left adjoint to $r$, similarly for the second triple;
\item[(ii)] the functors $i$, $s$, and $r$ are fully faithful;
\item[(iii)] $\Im (i)=\Ker (j)$.
\end{enumerate}

In the setting of unital rings, the following is an archetype example of recollement:  Let $A$ be a ring with identity and $e\in A$ an idempotent. Then there is a recollement situation 
\begin{equation}\label{recolidem}
\xymatrix{
A/AeA \rModd   \ar[rr]|-{inc} && \ar@<5pt>[ll]^{\Hom_A(A/AeA,-)}   \ar@<-5pt>[ll]_{A/AeA \otimes_A -}  
A \rModd  \ar[rr]|-{e(-)}  && \ar@<5pt>[ll]^{\Hom_{eAe}(Ae,-)} \ar@<-5pt>[ll]_{eA\otimes_{eAe} -}   eAe \rModd .}
\end{equation}

To check the adjointness of (i) and the fully faithfullness of (ii), one uses the following general hom-tensor calculus (see~\cite[\S20]{fuller}): For a triple $({}_R M,{}_S W_R, {}_S N)$ of modules over unital rings $R$ and $S$, we have a natural isomorphism 
\[
\Hom_R(M,\Hom_S(W,N)) \cong \Hom_S(W\otimes_R M,N).
\] Furthermore, if $M$ is finitely generated projective $R$-module then 
\[ 
\Hom_S(N,W)\otimes_R M \cong \Hom_S(N,W\otimes_T M).
\]
Finally for the triple  $(M_R,{}_S W_R, {}_S N)$, where $M$  is finitely generated projective $R$-module, we have the natural isomorphism 
\[ M\otimes_R\Hom_S(W,N)\cong \Hom_S(\Hom(M,W),N).\]

For rings with local units, the calculus of module theory is similar to the rings with units. In particular the above isomorphisms are valid if $R$ is a ring with local units and $S$ is a unital ring (see for example~\cite[49.1--49.3]{wisba}). Since $eAe$ is a ring with unit, the recollement~(\ref{recolidem}) can be extended for a ring $A$ with local units. 

The concept of recollement has a geometric origin, and it was first appeared in the work of Beilinson, Bernstein and Delign~\cite{beil} in the setting of triangulated categories and then in the setting of abelian categories in Franjou and Pirashvilli~\cite{fran} motivated by MacPherson-Vilonen construction for the category of perverse sheaves (see~\cite{fran,psaer} for background on recollements, applications and further references). 

In this note we work with a directed graph $E = (E^0,E^1,r,s)$ which consist of two sets $E^0$, $E^1$ and maps $r,s : E^1\rightarrow E^0.$ We consider the Leavitt path algebra $L(E)$ associated to $E$. For a background on Leavitt path algebras and the relevant terminologies see for example~\cite{abrams, ranga2,ranga1} and the references there. 

To state the main theorem we need to recall the notion of restriction and quotient of a graph. Let $E$ be a graph and $H$ a hereditary subset $E^0$. We denote by
$E_H$ the {\it restriction graph}
$$\Big (H, \{e\in E^1\mid s(e)\in H\}, r|_{(E_H)^1},s|_{(E_H)^1}\Big).$$
On the other hand, for a hereditary and saturated subset $X$, we denote by $E/X$ the {\it quotient graph}
$$\Big(E^0\setminus X,\{ e\in E^1\mid r(e)\notin X\}, r|_{(E/X)^1},s|_{(E/X)^1}\Big)$$

We are in a position to record our theorem. 
\begin{theorem}\label{meis}
Let $E$ be a row finite graph, $H$ a finite hereditary subset of $E^0$ and let $\overline H$ be its hereditary saturated closure.  Then there is a recollement of abelian categories 
$$\xymatrix{
L_K(E/\overline H) \rModd  \ar[r] & \ar@<3pt>[l]    \ar@<-3pt>[l]  
L_K(E) \rModd  \ar[r]  & \ar@<3pt>[l] \ar@<-3pt>[l] L_K(E_H)\rModd  .}$$
\end{theorem}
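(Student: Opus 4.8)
The plan is to realize this recollement as an instance of the idempotent recollement~(\ref{recolidem}), so the task reduces to: (a) finding the right idempotent $e\in A:=L_K(E)$; (b) identifying $eAe$ with $L_K(E_H)$; and (c) identifying $A/AeA$ with $L_K(E/\overline H)$. Since $H$ is finite, the natural candidate is $e=\sum_{v\in H} v$, an idempotent of $L_K(E)$ (note $A$ itself need only have local units, but $eAe$ is unital with identity $e$, which is exactly what is needed to invoke the extension of~(\ref{recolidem}) to rings with local units discussed above). So the first step is to write down $e$ and observe $AeA=I(\overline H)$, the two-sided ideal generated by $H$; this is a standard fact in the Leavitt path algebra literature, because the ideal generated by a hereditary set equals the ideal generated by its hereditary saturated closure, and $I(\overline H)$ is a graded ideal.

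The second step is to establish $eAe\cong L_K(E_H)$. Here I would use the description of the corner $eAe$ via paths: elements of $L_K(E)$ are $K$-linear combinations of terms $p q^*$ with $p,q$ paths having $r(p)=r(q)$, and $v(pq^*)w\neq 0$ forces $s(p)=v$, $s(q)=w$. Summing over $v,w\in H$ and using that $H$ is hereditary (so every path starting in $H$ stays in $H$, hence lies in $(E_H)^1$), one checks that $eAe$ is spanned by $pq^*$ with $p,q$ paths in $E_H$. Matching this against the generators and Leavitt relations (CK1, CK2) of $L_K(E_H)$ — noting that row-finiteness of $E$ passes to $E_H$ and that the sink/regular status of vertices of $H$ in $E_H$ is governed precisely by edges of $E$ that stay inside $H$ — gives the isomorphism $eAe\cong L_K(E_H)$. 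The map sending generators of $L_K(E_H)$ to their images in $eAe$ is well defined by the universal property; surjectivity is the spanning statement just described; injectivity follows from a grading/uniqueness argument (e.g. the graded uniqueness theorem, as $E_H$ inherits no issues, or a direct normal-form argument).

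The third step is $A/AeA\cong L_K(E/\overline H)$. This is the graph-theoretic heart and is essentially the quotient theorem for Leavitt path algebras: for a hereditary saturated subset $\overline H$, one has $L_K(E)/I(\overline H)\cong L_K(E/\overline H)$ — in this row-finite setting the quotient graph $E/\overline H$ as defined in the excerpt does the job with no need for the ``breaking vertices'' refinement, since there are no infinite emitters. I would either cite this directly or sketch it: the quotient map kills all $v\in\overline H$ and all edges into $\overline H$, the surviving generators satisfy exactly the relations of $L_K(E/\overline H)$, saturation of $\overline H$ is what guarantees the CK2 relation at vertices of $E^0\setminus\overline H$ descends correctly, and injectivity again comes from the graded uniqueness theorem applied to $E/\overline H$. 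Combining (a)--(c) and plugging into~(\ref{recolidem}) yields the stated recollement, with the six functors being the concrete Hom and tensor functors displayed there.

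The main obstacle I anticipate is not any single step in isolation but the bookkeeping around $\overline H$ versus $H$: the corner $eAe$ is governed by $H$ (the restriction graph $E_H$ uses $H$, not $\overline H$), while the quotient $A/AeA$ is governed by $\overline H$ (one must pass to the saturated closure to get a graded ideal and a sensible quotient graph). Reconciling ``$e=\sum_{v\in H}v$'' with ``$AeA=I(\overline H)$'' — i.e. checking carefully that saturating does not change the generated ideal, and that the hypothesis ``$H$ finite'' (hence $e$ a genuine finite idempotent) is what makes everything go through even though $\overline H$ may be infinite — is the delicate point, along with verifying that $E_H$ remains row-finite so that the local-units version of the hom-tensor calculus applies verbatim.
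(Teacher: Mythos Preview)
Your proposal is correct and follows essentially the same route as the paper: take the idempotent $e=\sum_{v\in H}v$, identify the corner $eL_K(E)e$ with $L_K(E_H)$, show $L_K(E)eL_K(E)=\langle H\rangle=\langle\overline H\rangle$ so that the quotient is $L_K(E/\overline H)$, and plug into the idempotent recollement~(\ref{recolidem}). The paper is terser---it simply asserts $L(E_H)=p_H L(E)p_H$ and proves $\langle H\rangle=\langle\overline H\rangle$ by the same induction on the $\Lambda_n(H)$ you allude to---while you spell out the corner identification in more detail; but the architecture is identical.
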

\begin{proof}

Observe that since $H$ is finite 
\begin{equation}\label{idempp}
L(E_H)=p_HL(E)p_H,
\end{equation}
 where
$p_H=\sum _{v\in H} v\in L(E)$ is an idempotent. 

Next we observe that $\langle H\rangle=\langle \overline H\rangle$. The hereditary saturated closure of $H$ is $\overline H =\cup_{n=0}^\infty \Lambda_n(H)$, where $\Lambda_0(H)=H$ and 
$$\Lambda_n(H)=\big \{y \in E^0 \mid s^{-1}(y) \not = \emptyset, r(s^{-1}(y)) \subseteq \Lambda_{n-1}(H)\big \} \cup \Lambda_{n-1}(H),$$ for $n\geq 1$. Clearly  $\langle H\rangle \subseteq \langle \overline H\rangle$.
We prove the converse by induction. For $n=0$, $\Lambda_0(H)=H \subseteq \langle H\rangle$. Suppose $\Lambda_{n-1}(H)\subseteq \langle H\rangle$ and let $y \in \Lambda_{n}(H)$. Then $y = \sum_{\alpha \in s^{-1}(y)} \alpha \alpha^*$. Since $r(\alpha) \in \Lambda_{n-1}(H) \subseteq \langle H\rangle$ and $\langle H\rangle$ is a two-sided ideal, it follows that $y \in  \langle H\rangle$. Thus $\overline H= \cup_{n=0}^\infty \Lambda_n(H) \subseteq \langle H\rangle$.  We have a natural isomorphism
\begin{equation}\label{idempp2}
L(E)/\langle H\rangle= L(E)/ \langle \overline H\rangle\cong L(E/\overline H).
\end{equation}
Now replacing (\ref{idempp}) and (\ref{idempp2}) in recollement diagram~\ref{recolidem}, the theorem follows. 
\end{proof}

It was observed in~\cite{abrams} that removing sources from a finite graph would give a Leavitt path algebra Morita equivalent to the one associated to the original graph. However removing sinks change the structure of the Leavitt path algebras substantially.  As an example, consider the Topilz algebra, i.e.,  the Leavitt path algebra associated to the graph  
\begin{equation*}
\xymatrix{
E: \, \, & \bullet \ar@(ld,lu)^{e}  \ar[r]^{f}  &  \bullet .  
}
\end{equation*}
As soon as we remove the sink and the edge $f$ attached to it, we are left with a loop whose  Leavitt path algebra is the Laurent polynomial ring $K[x,x^{-1}]$.  Whereas removing sources gives a Morita equivalence, removing sinks creates a recollement situation.

\begin{corollary}\label{tt}
Let $E$ be a finite graph and $\overline E$  a graph where all sources and sinks of $E$ are removed. Then 
there is a recollement 
\begin{equation}\label{cgc}
\xymatrix{
L_K(\overline E) \rModd  \ar[r] & \ar@<3pt>[l] \ar@<-3pt>[l]  
L_K(E) \rModd  \ar[r] & \ar@<3pt>[l] \ar@<-3pt>[l]\bigoplus_{\text{sinks}} K  \rModd .}
\end{equation}
\end{corollary}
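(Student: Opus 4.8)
The plan is to apply Theorem \ref{meis} to the specific hereditary set consisting of the sinks of $E$, and then identify the three corners of the resulting recollement with the three algebras appearing in \eqref{cgc}. Since $E$ is finite, it is in particular row finite and every hereditary subset is finite, so Theorem \ref{meis} applies to any hereditary $H$. First I would take $H$ to be the set of sinks of $E$; this set is hereditary for the trivial reason that a sink emits no edges, so the condition $r(s^{-1}(v)) \subseteq H$ is vacuous for every $v \in H$. With this choice, the restriction graph $E_H$ has vertex set $H$ and \emph{no} edges (again because sinks emit nothing), so $L_K(E_H)$ is a direct sum of copies of $K$, one for each sink; this identifies the right-hand corner with $\bigoplus_{\text{sinks}} K \rModd$.

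Next I would analyse the hereditary saturated closure $\overline H$ and the quotient graph $E/\overline H$. The closure $\overline H = \cup_n \Lambda_n(H)$ adds, at each stage, every vertex all of whose emitted edges already point into the previous stage; these are exactly the vertices that become ``dead ends'' once the sinks (and the vertices feeding only into them) are deleted. The quotient graph $E/\overline H$ therefore has as vertices those of $E$ that are not eventually absorbed into the sinks, and as edges those edges of $E$ whose range is still present. The key observation to record is that this quotient graph has no sinks: any vertex of $E/\overline H$ that emitted edges only into $\overline H$ would itself have been put into $\overline H$ by the saturation step, a contradiction. Removing the sources of $E$ as well (which by the result of \cite{abrams} recalled above only changes $L_K(E)$ up to Morita equivalence and, more to the point here, does not affect the hereditary-ideal structure used in the recollement) yields exactly $\overline E$, the graph with all sources and sinks of $E$ removed, and $L_K(E/\overline H) \rModd$ is equivalent to $L_K(\overline E) \rModd$.

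The remaining work is bookkeeping: substitute these identifications into the recollement of Theorem \ref{meis} to obtain \eqref{cgc}. The one point that requires a little care — and which I expect to be the main obstacle — is the interaction between ``remove sinks'' (which is the saturated closure step, genuinely producing the recollement) and ``remove sources'' (which is only a Morita equivalence and must be checked to be compatible with the functors in the recollement). Concretely, one must verify that deleting the sources of $E/\overline H$ to pass from $L_K(E/\overline H)$ to $L_K(\overline E)$ can be done via an idempotent-reduction that commutes with the three functors $L_K(E/\overline H)\rModd \to L_K(E)\rModd$, so that the recollement descends along the Morita equivalence; this is routine but is the only non-formal step. Once that is in place the corollary follows immediately from Theorem \ref{meis}.
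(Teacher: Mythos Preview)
Your proposal is correct and follows essentially the same path as the paper's proof: take $H$ to be the set of sinks, identify $L_K(E_H)$ with $\bigoplus_{\text{sinks}} K$, observe that $E/\overline H$ has no sinks, and then invoke the source-elimination Morita equivalence from \cite{abrams} to replace $L_K(E/\overline H)$ by $L_K(\overline E)$.

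The one place where you overcomplicate things is your final ``obstacle''. No compatibility check between the source-elimination step and the recollement functors is needed. A recollement is a diagram of abelian categories and functors satisfying certain properties; if $\mathscr A' \simeq \mathscr A$ is any equivalence of categories, composing with it produces a recollement with $\mathscr A'$ in place of $\mathscr A$, full stop. Since Morita equivalence is by definition an equivalence of module categories, you may simply replace $L_K(E/\overline H)\rModd$ by $L_K(\overline E)\rModd$ in the left corner with no further argument. The paper does exactly this in one line.
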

\begin{proof}
Let $H$ be the set of all sinks in $E$, which is a hereditary set. One can observe that 
$L(E_H)\cong  \bigoplus_{\text{sinks}} K.$ On the other hand, $E/\overline H$ is a graph by repeatedly removing all sinks from $E$ until there is no sinks exist. Indeed, if $u$ is a sink in 
$E/\overline H$, then $r(s^{-1}(u))$ has to be in $\overline H$, which gives $u \in \overline H$, as $\overline H$ is saturated. But this is a contradiction. 

By~\cite[Propositons~1.4 and 3.1]{abrams} removing sources from a graph, gives a Leavitt path algebra Morita equivalent to the original one. Thus repeating the source elimination give the Morita equivalence $L_K(\overline E) \approx L_K(E/\overline H)$. Now the corollary follows from Theorem~\ref{meis}. 
\end{proof}


\begin{example}
 
\hfill  
\begin{enumerate}
\item Let $T=L_K\big (\quad \, \, \xymatrix{\bullet \ar@(ld,lu)  \ar[r]  &  \bullet} \big)$ be the Topiliz algebra. By Corollary~\ref{tt} we have the following recollement situation

$$\xymatrix{
K[x,x^{-1}] \rModd  \ar[r] & \ar@<3pt>[l] \ar@<-3pt>[l]  
T \ar[r] & \ar@<3pt>[l] \ar@<-3pt>[l] K \rModd }$$

Although $T$ is indecomposable~\cite{gonzo}, but one can still break it down into less complicated algebras using recollement. 
\medskip

\item Let $T=L_K\big (\quad \, \, \xymatrix{\bullet \ar@(ld,lu)  \ar[r]  &  \bullet  \ar@(rd,ru) } \quad \, \,\big)$. Then we have the following recollement situation

$$\xymatrix{
K[x,x^{-1}] \rModd  \ar[r] & \ar@<3pt>[l] \ar@<-3pt>[l]  
T \ar[r] & \ar@<3pt>[l] \ar@<-3pt>[l] K[x,x^{-1}]\rModd  }$$

\medskip

\item In~\cite{aas}, Leavitt path algebras associated to finite acyclic graphs were classified. For such a graph, the Leavitt path algebra is a direct product of matrix algebras over the field $K$. Lemma~3.4 and Proposition~3.5 in \cite{aas} prove this by explicitly constructing an  isomorphism between these two algebras. In Corollary~\ref{tt}, for acyclic graph $E$, we get that $\overline E$ is in fact empty and thus the left hand side of (\ref{cgc}) is zero. Since in the general recollement situation~(\ref{popop}), $\mathscr A$ is a Serre subcategory of $\mathscr B$ and 
$\mathscr C \approx  \mathscr A / \mathscr B$, it follows that, in our setting (for $\mathscr B=0$) $L(E)$ is Morita equivalent to $\bigoplus_{\text{sinks}} K$, confirming the result of \cite{aas}.

\end{enumerate}
\end{example}

In a series of papers Rangaswamy~\cite{ranga1,ranga3}  and Ara-Rangaswamy~\cite{ranga2}  studied simple modules of Leavitt path algebras. In \cite{ranga2} they proved that for a Leavitt path algebra $L(E)$, where $E$ is a finite graph, any simple module is of the form of a Chen simple module (i.e., arising from an infinite path) if and only if any vertex of $E$ is the base of at most one cycle (these algebras have finite Gelfand-Kirillov dimension). 

Using the recollement and Kuhn's result \cite{kuhn} we give another characterisation of such algebras. In the following theorem we use a partial ordering defined in~\cite{ranga2}. A pre-order $\geq$ on the set of cycles of a directed graph $E$ is defined as follows: Let $c_1$ and $c_2$ be two cycles. We write $c_1 \geq c_2$ if there is a path from a vertex of $c_1$ to a vertex of $c_2$. 
For graphs whose vertices are bases of at most one cycle, $\geq$ is a partial order. In Theorem~\ref{minim} we work with minimal cycles with respect to $\geq$, i.e., cycles with no exist. 

In the following we will use Kuhn's result~\cite[Proposition~4.7]{kuhn}, which states that for a recollement 
$$\xymatrix{
\mathscr A \ar[r]|-{i} & \ar@<5pt>[l]^p    \ar@<-5pt>[l]_q  
\mathscr B \ar[r]|-{j}  & \ar@<5pt>[l]^r \ar@<-5pt>[l]_s   \mathscr C,}$$
there is a bijection between isomorphism classes of simple objects 
\begin{equation}\label{kjkj}
 \Big\{ \text{simples in } \mathscr A \Big\} \coprod  \Big \{ \text{simples in } \mathscr C \Big \} \longrightarrow \Big \{\text{simples in } \mathscr B  \Big\}. 
 \end{equation}

\begin{theorem}\label{minim}
Let $E$ be a finite graph whose vertices are base of at most one cycle. Then the isomorphism classes of simple (right) modules of $L_K(E)$ are in one to one correspondence with
the set of sinks and the set $A \times B$, where  $A$ is the set of all cycles in $E$ and $B$ is the set of all irreducible polynomials in $K[x,x^{-1}]$.

\end{theorem}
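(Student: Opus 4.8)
The plan is to induct on the structure of the graph $E$ by peeling off one minimal cycle at a time via the recollement of Theorem~\ref{meis}, each time converting ``count simples of $L_K(E)$'' into ``count simples of $L_K(E/\overline H)$'' plus ``count simples of $L_K(E_H)$'' using Kuhn's bijection~(\ref{kjkj}). First I would dispose of the sinks: let $S$ be the set of sinks of $E$, which is hereditary, and apply Corollary~\ref{tt} (or directly Theorem~\ref{meis} with $H=S$). Since $L_K(E_S)\cong\bigoplus_{\text{sinks}}K$ has exactly one simple module per sink, the bijection~(\ref{kjkj}) reduces the problem to counting simples of $L_K(E/\overline S)$, a finite graph with no sinks and still satisfying the ``at most one cycle through each vertex'' hypothesis (this hereditary condition is preserved under passing to restriction and quotient graphs, which I would note once at the start). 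So without loss of generality $E$ has no sinks; every vertex then lies on a path into some cycle.

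**The inductive step on cycles.** Now suppose $E$ has no sinks and is nonempty; then $E$ contains a cycle, hence a minimal one $c$ with respect to the pre-order $\geq$ (minimal cycles exist because $\geq$ is a partial order on a finite set under the standing hypothesis). Let $H=\overline{c^0}$ be the hereditary saturated closure of the vertex set of $c$; because $c$ is minimal with no exits leading to other cycles, $H$ is built from $c^0$ by saturating, and the restriction graph $E_H$ is (up to source removal / the structure of such ``closed'' pieces) essentially the single cycle $c$ together with trees feeding into it, so that $L_K(E_H)$ is Morita equivalent to $L_K(C_c)$ where $C_c$ is the graph consisting of the one cycle $c$. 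It is classical that $L_K(C_c)$, where $C_c$ is a cycle of length $n$, is isomorphic to $M_n\big(K[x,x^{-1}]\big)$, whose simple modules are in bijection with the irreducible polynomials in $K[x,x^{-1}]$ — that is, with $\{c\}\times B$. The key computational point is that the simples of $L_K(E_H)$ contribute exactly one copy of $B$ (the factor $\{c\}\times B$ of $A\times B$). The quotient $E/\overline H$ is a strictly smaller graph (it has fewer cycles, since $c$ is removed and no cycle of $E/\overline H$ maps to $c$), still finite, still with at most one cycle per vertex, so by induction its simples correspond to its sinks together with (its cycles) $\times\, B$. Finally I would check that the sinks of $E/\overline H$ are exactly the sinks of $E$ not already ``absorbed'' — but since we reduced to $E$ having no sinks, $E/\overline H$ contributes no new sinks (its sinks, if any, arise from vertices whose out-edges all pointed into $H$, and those are handled uniformly: in fact after the initial reduction the only sink contributions across the whole induction come from the original sink set $S$). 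Assembling via~(\ref{kjkj}) at each stage gives the bijection with $S \sqcup (A\times B)$, i.e. the set of sinks together with $A\times B$.

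**The main obstacle.** The delicate point is the precise identification of $L_K(E_H)$ for $H=\overline{c^0}$ and the bookkeeping that each minimal cycle contributes exactly one copy of $B$ with no overcounting: one must verify that $E_H$, possibly after removing sources, really is Morita equivalent to the single-cycle graph $C_c$, which uses minimality of $c$ (no exits to other cycles) together with the no-sink reduction, and one must confirm that the cycles of $E$ are partitioned correctly across the three terms of the recollement — $c$ landing in the $\mathscr C=L_K(E_H)\rModd$ slot and all other cycles surviving into $\mathscr A=L_K(E/\overline H)\rModd$. A clean way to organise this is to induct on the number of cycles of $E$: the base case is an acyclic graph (no cycles, so $A=\emptyset$), where after removing sinks one is left with the empty graph, matching the classification in~\cite{aas} that $L_K(E)$ is Morita equivalent to $\bigoplus_{\text{sinks}}K$, whose simples biject with the sinks. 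The inductive step is exactly the peeling-off described above. The only real care needed is that the standing hypothesis and the ``finiteness'' needed to apply Theorem~\ref{meis} (namely $H$ finite) are preserved throughout, which holds since $E$ is finite and all subgraphs in sight are finite.
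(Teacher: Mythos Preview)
Your approach is correct and mirrors the paper's own proof: first strip off the sinks via Corollary~\ref{tt}, then iteratively peel off one minimal cycle at a time via Theorem~\ref{meis}, using Kuhn's bijection~(\ref{kjkj}) at each stage and the Morita equivalence of the Leavitt path algebra of a cycle with $K[x,x^{-1}]$. The only simplification the paper makes over your sketch is in the inductive step: in a sink-free graph satisfying the hypothesis, a minimal cycle $c$ has no exits whatsoever, so $c^0$ is already hereditary and one may take $H=c^0$ directly, giving $E_H=c$ exactly and avoiding the extra source-elimination Morita argument you flag as the main obstacle.
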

\begin{proof}
By Corollary~\ref{tt} (and its proof), we can write 
$$\xymatrix{
L_K(\overline E) \rModd  \ar[r] & \ar@<3pt>[l] \ar@<-3pt>[l]  
L_K(E) \rModd  \ar[r] & \ar@<3pt>[l] \ar@<-3pt>[l] \bigoplus_{\text{sinks}} K\rModd .}$$
where $\overline E$ is a graph with no sinks whose vertices are base of at most one cycle. 
Now by~(\ref{kjkj}) simple modules of $L_K(E)$ is in one to one correspondence with the sinks of $E$ and the simple modules of $L(\overline E)$. 

On the other hand $L(\overline E)$ can be {\it stratified} by cycles, i.e., we have iterated recollement situations 
$$\xymatrix{
B_i \ar[r] & \ar@<3pt>[l] \ar@<-3pt>[l]  
B_{i-1} \ar[r] & \ar@<3pt>[l] \ar@<-3pt>[l] C_i }$$ 
where $1\leq i \leq k$ and $k$ is the number of cycles in $E$.  
Here $B_i=L(b_i)$,  $C_i=L(c_i)$, where $b_0=\overline E$, $b_{i}=b_{i-1}/ \overline{c_{i}}$, and $c_i$ is a minimal cycle in the graph $b_{i-1}$ with respect to the ordering $\geq$. 

Again by~(\ref{kjkj}) there is a bijection between isomorphism classes of simple objects 
\begin{equation}\label{aoal}
 \Big\{ \text{simples in } B_{i} \Big\} \coprod  \Big \{ \text{simples in } C_{i} \Big \} \longrightarrow \Big \{\text{simples in } B_{i-1}  \Big\}. 
 \end{equation}

Since $c_i$ are cycles, $L(c_i)$ is Morita equivalent to the ring $K[x,x^{-1}]$. Thus the simple modules of $L(c_i)$ is in one to one correspondence with the irreducible polynomials of $K[x,x^{-1}]$. The theorem now follows from~(\ref{aoal})  by noting that $L(b_k)$ is also Morita equivalent to $K[x,x^{-1}]$.
\end{proof}

We note that the above theorem can also be deduced from the results of papers~\cite{ranga0,ranga2}. In \cite[Theorem~1.1]{ranga2},  a bijection between $S$, the set of non-isomorphic simple modules, and the set of primitive ideals of $L(E)$ was established. On the other hand Corollary 3.14 in~\cite{ranga0} implies there is a bijection between $S$ and $A \times B$, where $A$ is the set of all cycles in $E$ and $B$ is the set of all irreducible polynomials in $K[x,x^{-1}]$.

\end{document}